\font\eulersm=eusm10 at 11pt
\def\esm#1{\hbox{\eulersm {#1}}}
\newcommand{\T}{\esm{T}}
\font\eulersmm=eusm10 at 9pt
\def\esmm#1{\hbox{\eulersmm {#1}}}
\newcommand{\sT}{\esmm{T}}
\newtheorem{theorem}{Theorem}[section]
\newtheorem{lemma}[theorem]{Lemma}
\theoremstyle{definition}
\newtheorem{definition}[theorem]{Definition}
\newcommand{\oset}[3][0ex]{%
  \mathrel{\mathop{#3}\limits^{
    \vbox to#1{\kern-\ex@
    \hbox{$\scriptstyle#2$}\vss}}}}
\def\la#1{\oset{\vspace*{-5mm}\leftarrow}{#1}}
\def\ra#1{\oset{\rightarrow}{#1}}
\renewcommand{\bar}{\overline}
\renewcommand{\hat}{\widehat}
\DeclareMathOperator{\N}{\mathbf{N}}
\DeclareMathOperator{\R}{\mathbf{R}}
\DeclareMathOperator{\E}{\mathbf{E}}
\DeclareMathOperator{\F}{\mathbb{F}}
\DeclareMathOperator{\TT}{\mathbb{T}}
\DeclareMathOperator{\PF}{\mathrm{PF}}
\DeclareMathOperator{\Cyl}{Cyl}
\definecolor{green}{RGB}{0,170,100}
\begin{document}

\date{\today\ (version 0.1)} 
\title{Rigidity and reconstruction for graphs}
\author[G.~Cornelissen]{Gunther Cornelissen}
\address{\normalfont Mathematisch Instituut, Universiteit Utrecht, Postbus 80.010, NL-3508 TA Utrecht}
\email{g.cornelissen@uu.nl}
\author[J.~Kool]{Janne Kool}
\address{\normalfont  Institut for Matematiske Fag, K{\o}benhavns Universitet, Universitetsparken 5, 2100 K{\o}benhavn \O}
\email{{j.kool@math.ku.dk}}
\thanks{Part of this work was done while 
the second author visited the Max-Planck-Institute in Bonn. We thank Tom Kempton and Matilde Marcolli for their input.}

\subjclass{05C50, 05C38, 37F35, 53C24}
\keywords{\normalfont Graph, walks, boundary, Reconstruction conjecture}

\begin{abstract} \noindent We present measure theoretic rigidity for graphs of first Betti number $b>1$ in terms of measures on the boundary of a $2b$-regular tree, that we make explicit in terms of the edge-adjacency and closed-walk structure of the graph. We prove that edge-reconstruction of the entire graph is equivalent to that of the ``closed walk lengths''. 
\end{abstract}

\maketitle


%


\section{Some rigidity phenomena} 
A compact Riemann surface $X$ of genus $g \geq 2$ is uniquely determined by a dynamical system, namely, the action of the fundamental group $\Pi_g$ in genus $g$ on the Poincar\'e disk $\Delta$ by M\"obius transformations. Things change when we replace $\Delta$ by its real one-dimensional boundary $\partial \Delta = S^1$; the action of $\Pi_g$ extends to $S^1$, but this action will only depend on the \emph{topological} isomorphism type of $X$, viz., the genus $g$. Rigidity re-enters the picture via the Lebesgue-measure on $S^1$, in the sense that two Riemann surfaces $X$ and $Y$ are isomorphic if and only if there exists a $\Pi_g$-equivariant absolutely continuous homeomorphism $S^1 \rightarrow S^1$ (cf.\ e.g.\ \cite{Kuusalo}). 

A similar result holds for more general hyperbolic spaces. We describe a version for graphs (cf.\ Coornaert \cite{CoornaertCRAS}): 
let $G=(V,E)$ denote a graph with vertex set $V$ and edge set $E$, consisting of unordered pairs of elements of $V$. Let $b$ denote the first Betti number of $G$, and assume $b \geq 2$. Knowing $G$ is the same as knowing the action of a free group $\F_b$ or rank $b$ on the universal covering tree $\T$ of $G$. Again, the dynamical system of $\F_b$ acting on the boundary $\partial \T$ of $\T$ (i.e., the space of ends of $\T$) is topologically conjugate to a system that only depends on $b$ (to wit, the action of $\F_b$ on the boundary of its Cayley graph), but if one considers the set of Patterson-Sullivan measures on $\partial \T$, rigidity holds; we provide an exact result in Theorem \ref{rig} below. 

The graph rigidity theorem shows the importance of the structure of the ``space of closed walks'' $(C,\F_b,\mu)$ of a graph in understanding the structure of a graph. We apply this insight to \emph{reconstruction problems} for graphs. 
We find that for average degree $>4$, we can reconstruct various ingredients of the explicit formula for the rigidifying measure. We conclude that reconstruction of a graph is intimately related with the structure of lengths of closed walks in a graph. The final section confirms this; we prove in an elementary way that the edge reconstruction conjecture is equivalent to the reconstruction of ``closed walks and their length''. 

One may extend rigidity from graphs to curves over non-archimedean fields. In \cite{CorKool}, we have explained how, for such a generalization, one needs the require boundary homomorphisms to respect relations between so-called \emph{harmonic measures}. 
It would be interesting to use such insights to formulate reconstruction problems for such curves, and for Riemann surfaces.

\section{Dynamics on the boundary of the universal covering tree} 
In this section, we formulate a precise rigidity theorem for graphs, in terms of measures induced by Patterson-Sullivan measure on a ``universal'' topological dynamical system (``universal'' in that it only depends on the first Betti number of the graph).

Let $G$ denote a graph with vertex set $V$ and edge set $E$. Assume that the first Betti number $b$ of $G$ satisfies $b=b_1(G)>1$ and that $G$ does not have ends, i.e., vertices of degree 1. Let $\T$ denote the universal covering tree of $G$, so that $G$ is the quotient of $\T$ by its fundamental group $\Gamma \cong \F_{b}$, a free group of rank  $b$. By assumption, $\T$ has no end-vertices and is locally compact. Let $\partial \T$ denote the (topological) space of ends of $\T$, on which $\Gamma$ acts. The space $\partial \T$ consists of equivalence classes of half lines in $\T$, where two half lines are equivalent if they differ in only finitely many edges. Fixing a base point $x_0 \in \T$, the space $\partial \T$ consists of all half lines $ p \colon \N \rightarrow V(\T)$ with $p(0)=x_0$, and for all $n \geq 1$, $p(n) \neq p(n-1)$ and $p(n-1) \neq p(n+1)$.
The Borel sigma-algebra  of $\partial \T$ is spanned by a basis of clopen sets, the \emph{cylinder sets} $\mathrm{Cyl}_{x_0}(f)$, where $f$ runs through the edges $f \in E(\T)$. Here, a cylinder set $\mathrm{Cyl}_{x_0}(f)$ consists of classes of half-lines that originate from $x_0$ and pass through $f$.

The above definition is that of the so-called \emph{visual boundary} of $\T$. It is also possible to define $\partial \T $ as the \emph{hyperbolic boundary} $\partial \T = \bar{\T}-\T$, where $\bar \T$ is a metric completion of $\T$ in a suitable hyperbolic metric on $\T$. The two notions coincide in our case, so we will use them interchangeably; cf.\ Chapter 2 in \cite{CDP}. 

Let $\mathrm{Cay}(\F_{b})$ denote the Cayley graph of $\F_{b}$, for any chosen symmetrization of a set of generators $g_1,\dots,g_{b}$. This is a $2b$-regular tree. Let $C: = \partial \mathrm{Cay}(\F_{b})$ denote its boundary. The Borel sigma-algebra of $C = \partial \mathrm{Cay}(\F_b)$ is spanned by the cylinder sets $\mathrm{Cyl}(g):=\mathrm{Cyl}_1(g)$ for $g \in \F_b-\{1\}$, given as the set of limits of reduced words that begin with $g$.

\begin{lemma}\label{topconj} The tree $\T$ is quasi-isometric to $\mathrm{Cay}(\F_{b})$, and there is a topological conjugacy 
$(\Phi_G,\alpha_G) \colon (\partial\T,\Gamma) \rightarrow (C, \F_{b})$ 
of dynamical systems, i.e., \begin{enumerate} \item  $\Phi_G \colon \partial \T \rightarrow C$ is a homeomorphism; \item $\alpha_G \colon \Gamma \rightarrow \F_{b}$ is a group isomorphism; \item The equivariance $\Phi_G(\gamma x) = \alpha_G(\gamma) \Phi_G(x)$ holds for all $x \in \partial \T$ and $\gamma \in \Gamma$. \end{enumerate} 
\end{lemma}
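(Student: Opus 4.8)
The plan is to construct $(\Phi_G,\alpha_G)$ by hand, by realising $\mathrm{Cay}(\F_b)$ as the quotient of $\T$ that collapses the lifts of a spanning tree of $G$; the statement then amounts to a packaging of covering-space theory with the quasi-isometry invariance of the Gromov boundary, and there is no serious obstacle beyond the bookkeeping. Throughout, $G$ is a finite connected graph, and $\Gamma$, $\T$, $C$, $x_0$ are as in the text. First I would fix a spanning tree $S\subseteq G$ and a vertex $v\in S$. The $b$ edges $e_1,\dots,e_b$ of $G$ not lying in $S$ yield a free basis $[\ell_1],\dots,[\ell_b]$ of $\Gamma=\pi_1(G,v)$, where $\ell_i$ runs from $v$ to $e_i$ inside $S$, crosses $e_i$, and returns to $v$ inside $S$; declaring $\alpha_G([\ell_i])=g_i$ then defines a group isomorphism $\alpha_G\colon\Gamma\to\F_b$, which is assertion (2).

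Next, lift to the covering $\pi\colon\T\to G$ and fix a vertex $\tilde v$ over $v$. The preimage $\pi^{-1}(S)$ is a disjoint union of subtrees, each carried isomorphically onto $S$ by $\pi$; since $\pi^{-1}(v)$ is a free $\Gamma$-orbit, these components are canonically indexed as $\{\Sigma_\gamma\}_{\gamma\in\Gamma}$, with $\tilde v\in\Sigma_1$ and $\gamma\Sigma_{\gamma'}=\Sigma_{\gamma\gamma'}$. A short computation --- lift $\ell_i$ starting at $\tilde v$ and track which component the lift ends in --- shows that the lifts of $e_i$ incident to $\Sigma_\gamma$ join it to exactly $\Sigma_{\gamma[\ell_i]}$ and $\Sigma_{\gamma[\ell_i]^{-1}}$, one lift to each. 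Consequently, collapsing every $\Sigma_\gamma$ to a point turns $\T$ into a $2b$-regular tree which, under $\gamma\mapsto\alpha_G(\gamma)$, is identified edge-for-edge with $\mathrm{Cay}(\F_b)$; write $q\colon\T\to\mathrm{Cay}(\F_b)$ for the resulting quotient map. By construction $q$ is $\alpha_G$-equivariant: $q(\gamma x)=\alpha_G(\gamma)q(x)$.

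The map $q$ is a quasi-isometry: its fibres lie in the subtrees $\Sigma_\gamma\cong S$ and hence have diameter at most $\mathrm{diam}(S)$, while $q$ sends each edge of $\T$ to an edge or a point and so does not increase distances; lifting a geodesic of $\mathrm{Cay}(\F_b)$ back through the $\Sigma_\gamma$ one segment at a time gives the matching lower bound on distances. (Alternatively, the quasi-isometry follows abstractly from the fact that $\Gamma$ acts freely, properly and cocompactly by isometries on the proper geodesic space $\T$.) In particular $\T$ is quasi-isometric to $\mathrm{Cay}(\F_b)$, which is the first claim of the lemma.

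Finally, $\T$ and $\mathrm{Cay}(\F_b)$ are $0$-hyperbolic proper geodesic spaces whose Gromov boundaries coincide with the visual boundaries $\partial\T$ and $C$ introduced above, so the quasi-isometry $q$ induces a homeomorphism $\Phi_G:=\partial q\colon\partial\T\to C$ (naturality of the Gromov boundary under quasi-isometries; cf.\ \cite{CDP}). Explicitly, $\Phi_G$ sends the class of a half-line from $x_0$ to the end determined by its image under $q$, which is again a half-line because every half-line in $\T$ meets infinitely many of the finite subtrees $\Sigma_\gamma$ and crosses each bridge between consecutive ones only once; continuity, injectivity and surjectivity are then routine, and compactness of $\partial\T$ (here $\T$ is locally finite) upgrades $\Phi_G$ to a homeomorphism, giving (1). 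Assertion (3) is inherited from the $\alpha_G$-equivariance of $q$, since the $\Gamma$-action on $\partial\T$ and the $\F_b$-action on $C$ are the boundary extensions of the isometric actions on $\T$ and $\mathrm{Cay}(\F_b)$. The only step needing genuine care is the covering-space bookkeeping of the second paragraph: it must deliver the \emph{same} isomorphism $\alpha_G$ that appeared in (2), matching generators of $\F_b$ with edges of $\mathrm{Cay}(\F_b)$, so that a single $\alpha_G$ does simultaneous duty in all three assertions.
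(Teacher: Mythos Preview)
Your argument is correct and proves the lemma. The route differs from the paper's in its level of concreteness. The paper simply fixes an abstract isomorphism $\alpha\colon\F_b\to\Gamma$, takes the orbit map $\varphi\colon\mathrm{Cay}(\F_b)\to\T$, $g\mapsto\alpha(g)(x_0)$, invokes the general fact (from \cite{CDP}) that this is a quasi-isometry extending to a boundary homeomorphism $\Phi$, checks equivariance by a one-line computation, and then sets $(\Phi_G,\alpha_G)=(\Phi^{-1},\alpha^{-1})$. You instead go the other way: you pin down $\alpha_G$ explicitly via a spanning tree, realise $\mathrm{Cay}(\F_b)$ as the quotient of $\T$ that collapses each lifted copy of the spanning tree, and exhibit the collapse map $q\colon\T\to\mathrm{Cay}(\F_b)$ as the quasi-isometry. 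Your $q$ is a quasi-inverse to the paper's $\varphi$, so the induced boundary maps agree. The paper's version is shorter and leans on \cite{CDP}; yours is more self-contained and makes the combinatorics of the identification transparent, which is a genuine advantage if one later wants to compute $\Phi_G$ on cylinder sets (as the paper in fact does in the next section).
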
 

\begin{proof} (See, e.g., \cite{CDP}, Theorem 4.1.)  Choose a base point $x_0 \in \T$ and an isomorphism $\alpha \colon \F_b \rightarrow \Gamma$. Now $$ \varphi \colon \mathrm{Cay}(\F_b) \rightarrow \T \colon g \mapsto \alpha(g)(x_0) $$ is a quasi-isometry and extends to a boundary homeomorphism $$\Phi \colon C \rightarrow \partial \T \colon \lim g \mapsto \lim \alpha(g)(x_0)$$ (loc.\ cit., 2.2).  The resulting limit map is obviously equivariant w.r.t.\ the group isomorphism $\alpha$, since for $h \in \F_b$ and $\lim g \in C$, we have $$\Phi(h \lim g) = \Phi(\lim hg) = \lim \alpha(h)\alpha(g)(x)= \alpha(h) \lim \alpha(g)(x_0) = \alpha(h) \Phi(\lim g).$$ Hence we can set $\alpha_G=\alpha^{-1}$ and $\Phi_G = \Phi^{-1}$. 
\end{proof}

\begin{definition} \label{busi} Let $d(\cdot,\cdot)$ denote the distance between the vertices of $\T$ (i.e., such that $d(v,w)=1$ if $v$ and $w$ are adjacent). 
For $\xi\in\partial\T$ and $x,y\in \T$ the \emph{Busemann function} is defined by
\[B_{\xi}(x,y)=\lim_{\substack{z\in{\small \sT}\\z\to\xi}}(d(x,z)-d(y,z)).\]

A family of positive finite Borel measures $\{\mu_x\}_{x \in \sT}$ on $\partial \T$ is called $\Gamma$-\emph{conformal of dimension $\delta$} if it satisfies the following properties:
\begin{enumerate}
 \item the family $(\mu_x)_x$ is $\Gamma$-equivariant, i.e., $\mu_{\gamma x}=(\gamma^{-1})_{\ast}\mu_x$, $\forall x\in\T, \gamma\in\Gamma$.
 \item for all $x,y\in\T$ the Radon-Nikodym derivative of $\mu_x$ with respect to $\mu_y$ exists and equals $d\mu_x/d\mu_y(\xi)=e^{-\delta B_{\xi}(x,y)}.$ 
\end{enumerate}
Observe that if $\xi \in \Cyl_x(f)$, then \begin{equation} \label{cst} B_{\xi}(x,o(f)) = d(x,\xi)-d(o(f),\xi) = d(x,o(f))\end{equation} is constant in $\xi$.
We consider such measures only up to scaling by a global constant. 
\end{definition}

Families of $\Gamma$-conformal measures exist: let $\mu_x=\mu_{G,x}$ denote the family of Patterson-Sullivan measures for the action of $\Gamma$ on $\T$, based at some point  $x \in \T$, defined as the weak limit of measures 
$$ \mu_{x} = \lim_{ s \rightarrow \log(\lambda)} \frac{\sum\limits_{\gamma \in \Gamma} e^{-s d(x,\gamma x)} \delta_{\gamma x}}{\sum\limits_{\gamma \in \Gamma} e^{-sd(x,\gamma x)}}, $$
for suitable (unique) $\lambda$ (actually, $\lambda=\lambda_{\PF}$ is the Perron-Frobenius eigenvalue of the edge-adjacency operator $T$ defined in the next section), and $\delta_x$ is the Dirac delta measure at $x$; the dimension of the measures $\mu_x$ is $ \log \lambda$.

Recall that two measures $\mu_1$ and $\mu_2$ on a space $Y$ are called \emph{mutually absolutely continuous} if $\mu_1(A)=0 \iff \mu_2(A)=0$ for all measurable $A \subseteq Y$. A measurable map $\varphi \colon (X,\mu_X) \rightarrow (Y,\mu_Y)$ between measure spaces is absolutely continuous if $\Phi_* \mu_X$ and $\mu_Y$ are mutually absolutely continuous on $Y$. Here, the \emph{push-forward measure} $\varphi_* \mu_X$ is defined by $\varphi_*\mu_X(A):= \mu_X(\varphi^{-1}(A))$ for measurable $A \subseteq Y$. 
We have the following measure-theoretic rigidity theorem: 

\begin{theorem} \label{rig} Let $G$ denote a graph of minimal degree $\geq 3$ and first Betti number $b>1$, with universal covering tree $\T$ and fundamental group $\Gamma$, and let $\mu$ denote a $\Gamma$-conformal measure on $\T$. Let $(\T',\Gamma',\mu')$ denote the same data associated to another graph $G'$ of minimal degree $\geq 3$ with the same first Betti number $b>1$. Then $G$ and $G'$ are isomorphic if and only if the push-forward measures $\Phi_{G\ast} \mu$ and $\Phi_{G'\ast} \mu'$ on $C$ have the same dimension and are mutually absolutely continuous. 
\end{theorem}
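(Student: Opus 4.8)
The plan is to show that the measure class of $\Phi_{G\ast}\mu$ on $C$, together with its dimension, encodes exactly the \emph{marked translation length function} of the action $\Gamma\curvearrowright\T$ --- equivalently, the lengths of closed walks in $G$ labelled by conjugacy classes of $\F_b$ --- and then to invoke rigidity of length functions for actions on trees. The implication ``$\Rightarrow$'' is soft: an isomorphism $G\xrightarrow{\ \sim\ }G'$ lifts to an isometry $\Psi\colon\T\to\T'$ equivariant for the induced isomorphism $\beta\colon\Gamma\to\Gamma'$, and since Patterson--Sullivan measures --- and, up to scaling, any $\Gamma$-conformal family of fixed dimension --- are intrinsic to the action, $\Psi$ carries $\mu$ to $\mu'$ up to scaling. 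Taking in Lemma~\ref{topconj} the base point $\Psi(x_0)$ for $G'$ and $\alpha_{G'}=\alpha_G\circ\beta^{-1}$, one gets $\Phi_G=\Phi_{G'}\circ\partial\Psi$, whence $\Phi_{G'\ast}\mu'=\Phi_{G\ast}\mu$ up to scaling; these have the same dimension ($\log\lambda_{\PF}$ is an isomorphism invariant of $G$) and are mutually absolutely continuous. For other choices in Lemma~\ref{topconj} the two push-forwards differ by boundary homeomorphisms of $C$ arising from automorphisms of $\F_b$ and changes of base point; being boundary maps of quasi-isometries of $\mathrm{Cay}(\F_b)$, these preserve both dimension and measure class, so the stated property does not depend on the choices.

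For ``$\Leftarrow$'', put $\nu:=\Phi_{G\ast}\mu$ and $\nu':=\Phi_{G'\ast}\mu'$; both are $\F_b$-quasi-invariant, as $\mu$ and $\mu'$ are ($\Gamma$-conformal) and $\Phi_G,\Phi_{G'}$ are equivariant. The first step is to compute the Radon--Nikodym cocycle of $\nu$ on $(C,\F_b)$: transporting along $\Phi_G$ and using part~(2) of Definition~\ref{busi} together with \eqref{cst}, one finds it is \emph{locally constant} --- hence continuous --- and given by a Busemann expression in $\T$ with coefficient $\delta=\log\lambda_{\PF}(G)$. Its values at periodic points carry the essential data: for $g\in\F_b-\{1\}$ with attracting fixed point $g^+\in C$ --- which, $\Phi_G$ being a conjugacy, equals $\Phi_G(\gamma^+)$ for the attracting endpoint $\gamma^+\in\partial\T$ of the axis of $\gamma:=\alpha_G^{-1}(g)$ --- the cocycle at $(g,g^+)$ equals $\lambda_{\PF}(G)^{\ell_G(\gamma)}$, where $\ell_G(\gamma)$ is the translation length of $\gamma$ on $\T$, that is, the minimal length of a closed walk in $G$ in the free homotopy class of $\gamma$; this uses the elementary identity $B_{\gamma^+}(x_0,\gamma x_0)=\ell_G(\gamma)$, computed on the axis (cf.\ \cite{CDP}). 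Likewise the cocycle of $\nu'$ at $(g,g^+)$ equals $\lambda_{\PF}(G')^{\ell_{G'}(\gamma')}$ with $\gamma':=\alpha_{G'}^{-1}(g)$.

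Mutual absolute continuity now enters. By the chain rule the two continuous cocycles differ by the coboundary of $\rho:=d\nu/d\nu'$, which is a priori only measurable, so they \emph{formally} agree at every periodic point; making this rigorous is the technical heart of the proof, and is where the explicit edge-adjacency description of the measure is indispensable. Passing to the Markov coding of $\partial\T$ by the subshift of finite type $\Sigma_T$ whose transition matrix is the edge-adjacency operator $T$, the measure $\mu$ becomes the Gibbs measure of an explicit H\"older potential $\psi_G$ on $\Sigma_T$ whose periods are $-\delta$ times the closed-walk lengths of $G$ (the coboundary part of $\psi_G$ telescoping to zero over a loop), and likewise for $G'$; ergodicity of $\mu$ for the boundary action is classical (\cite{CoornaertCRAS}). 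The equivariant homeomorphism $\Phi_{G'}^{-1}\circ\Phi_G$ identifies $\Sigma_T$ with $\Sigma_{T'}$ while respecting measure classes (since $\nu\sim\nu'$) and matching periodic orbits that carry the same conjugacy class of $\F_b$; a Liv\v{s}ic-type measurable rigidity argument then upgrades the measurable coboundary to a H\"older one, and H\"older-cohomologous potentials have equal periods, so $\lambda_{\PF}(G)^{\ell_G(\gamma)}=\lambda_{\PF}(G')^{\ell_{G'}(\gamma')}$ for all $g$. Since the dimensions coincide, $\lambda_{\PF}(G)=\lambda_{\PF}(G')$, and this common value exceeds $1$; hence $\ell_G(\gamma)=\ell_{G'}(\gamma')$ for every $\gamma\in\Gamma$, i.e.\ the actions $\Gamma\curvearrowright\T$ and $\Gamma'\curvearrowright\T'$ have the same marked translation length function under $\alpha_{G'}^{-1}\circ\alpha_G$.

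To conclude: since $G$ and $G'$ have minimal degree $\geq3$ and $b\geq2$, these actions are minimal, cocompact and irreducible, so by the rigidity of length functions for group actions on trees (Culler and Morgan) there is an $(\alpha_{G'}^{-1}\circ\alpha_G)$-equivariant isometry $\T\to\T'$, which descends to an isomorphism $G=\Gamma\backslash\T\xrightarrow{\ \sim\ }\Gamma'\backslash\T'=G'$. I expect the decisive obstacle to be precisely the passage from ``mutually absolutely continuous'' --- a weak measurable condition that, a priori, says nothing about the measure-zero set of periodic points --- to equality of the geometric cocycles on periodic orbits; it is the Gibbs/Markov structure supplied by the edge-adjacency operator that makes the Liv\v{s}ic-type argument available, which is also the reason the rigidity is most naturally phrased through closed-walk lengths.
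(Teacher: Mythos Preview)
Your route is genuinely different from the paper's. The paper does not argue through translation length functions or Culler--Morgan at all; it simply quotes the rigidity theorem from \cite{CorKool} (extending Coornaert \cite{CoornaertCRAS}): $G\cong G'$ iff there exist a group isomorphism $\alpha\colon\Gamma\to\Gamma'$ and an $\alpha$-equivariant homeomorphism $\varphi\colon\partial\T\to\partial\T'$ that is absolutely continuous for $\mu,\mu'$ of equal dimension. It then sets $\varphi=\Phi_{G'}^{-1}\circ\Phi_G$ and $\alpha=\alpha_{G'}^{-1}\circ\alpha_G$ and observes that the hypotheses transport to exactly the stated condition on $(C,\F_b)$. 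That is the whole proof. Your argument, by contrast, essentially \emph{reproves} the cited rigidity by extracting the marked length spectrum from the Radon--Nikodym cocycle at attracting fixed points and then invoking length-function rigidity for tree actions. This is more self-contained and makes the link to closed-walk lengths---which the paper only develops in later sections---explicit from the outset; the paper's approach buys brevity by outsourcing the analytic content.

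The one step in your argument that is not yet solid is the Liv\v{s}ic passage, and your own diagnosis of it is accurate but the proposed fix is misdirected. You phrase it via the codings $\Sigma_T$ and $\Sigma_{T'}$, but $\Phi_{G'}^{-1}\circ\Phi_G$ is \emph{not} a conjugacy of the shift maps (the alphabets and transition matrices are different, and the map need not be finitary), so shift Liv\v{s}ic does not apply as written. The correct setting is the $\F_b$-action on $C$ itself: both cocycles $c_G,c_{G'}$ are H\"older on $C$ (pullbacks of Busemann cocycles), and mutual absolute continuity says they differ by the measurable coboundary $\log(d\nu/d\nu')$. What you then need is a measurable Liv\v{s}ic theorem for the boundary action of a hyperbolic group with respect to the Patterson--Sullivan class, or equivalently you can pass to the geodesic flow on a single graph (say $G$), where both cocycles become H\"older functions on one subshift of finite type and ordinary Liv\v{s}ic applies. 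Either way this is a genuine input, not a formality. With that in place, your deduction $\ell_G\equiv\ell_{G'}$ under $\alpha_{G'}^{-1}\circ\alpha_G$ is correct, and Culler--Morgan gives an equivariant isometry $\T\to\T'$ which descends to $G\cong G'$ since both actions are free, minimal and simplicial.
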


\begin{proof} If $G$ and $G'$ are isomorphic, there is nothing to prove. For the converse direction, 
from \cite{CorKool}, Theorem 2.7, we recall measure-theoretic rigidity for graphs (the case of graphs with the \emph{same} covering trees was proven by Coornaert in \cite{CoornaertCRAS}): the graphs $G$ (corresponding to $(\partial \T, \Gamma, \mu)$) and $G'$  (corresponding to $(\partial \T', \Gamma', \mu')$) are isomorphic if and only if there exists
a group isomorphism $\alpha \colon \Gamma \rightarrow \Gamma'$
and 
a homeomorphism $\varphi \colon \partial \T \rightarrow \partial \T'$
such that 
\begin{enumerate}
\item[\textup{(a)}] $\varphi$ is $\alpha$-equivariant, i.e., we have $\varphi(\gamma x) = \alpha(\gamma) \varphi(x)$, $\forall x \in \partial \T, \gamma \in \Gamma$;
\item[\textup{(b)}] the measures $\mu$ and $\mu'$ have the same dimension; 
\item[\textup{(c)}]$\varphi$ is absolutely continuous w.r.t.\ $\mu$ and $\mu'$.
\end{enumerate}

The theorem is a reformulation of this result by passing to the fixed space $(C,\F_b)$: set $\varphi:=\Phi_{G'}^{-1} \circ \Phi_{G} \mbox{ and } \alpha:=\alpha_{G'}^{-1} \circ \alpha_{G}.$ If $\Phi_{G\ast} \mu$ and $\Phi_{G'\ast} \mu'$ are absolutely continuous and have the same dimension, then $(\varphi,\alpha)$ satisfy the five listed conditions (1)-(2) and (a)-(c), so $G$ and $G'$ are isomorphic. 
\end{proof}

\section{The measure and the $T$-operator} 

In this section, we show that the Patterson-Sullivan measure can be described in terms of the Perron-Frobenius eigenspace of a certain ``edge adjacency operator''. 

If $e=\{v_1,v_2\} \in E$, we denote by $\ra{e}\ =(v_1,v_2)$ the edge $e$ with a chosen orientation, and by $\la{e}\ =(v_2,v_1)$ the same edge with the inverse orientation to that of $\ra{e}$. Let $o(\ra{e})=v_1$ denote the origin of $\ra{e}$ and $t(\ra{e})=v_2$ its end point. 
The \emph{edge adjacency matrix} $T=T_G$ (compare \cite{Terras}) is defined as follows; let $\E$ denote the set of oriented edges of $G$ for any possible choice of orientation, so $|\E|=2|E|$, $T$ is defined to be the $2|E|\times2|E|$ matrix, in which the rows and columns are indexed by $\E$, and 
$$T_{\ra{e_1},\ra{e_2}}= \left\{ \begin{array}{l} 1 \mbox{ if }t(\ra{e_1})=o(\ra{e_2}) \mbox{ but }\ra{e_2}\ \neq\ \la{e_1}; \\ 0 \mbox{ otherwise}. \end{array} \right.$$ Since $b \geq 2$,  T is an irreducible non-negative matrix.  

Fix a base point $x_0 \in \T$ and let $v_0$ denote the corresponding vertex in the graph $G$. Choose a spanning tree $\mathcal B$ for $G$, and let $\{e_1,\dots,e_b\}$ denote the set of edges outside $\mathcal B$; choose an orientation on $e_i$. Let $\{\gamma_1,\dots,\gamma_b\}$ denote a set of generators for the fundamental group of $G$, seen as closed walks based at $v_0$ through an isomorphism $\Gamma \rightarrow \pi_1(G,v_0)$, such that $\gamma_i$ is a closed walk that passes through $\mathcal B$ and $\gamma_i$, but not through $\gamma_j$ for $j \neq i$.  Choose an isomorphism $\alpha \colon \F_b \rightarrow  \Gamma$ and set $g_i:=\alpha^{-1}(\gamma_i)$ as generators for $\F_b$. 
For $\gamma \in \Gamma$, let $\ell(\gamma)$ denote the \emph{length} of the closed walk $\gamma$. 
If $g \in \F_b$, we define the \emph{final edge} $\tau(g) \in \E$ of $g$ as follows: $$\tau(g)=\left\{ \begin{array}{l} \ra{e_i} \mbox{ if $g_i$ is the final letter of $g$}; \\ \la{e_i} \mbox{ if $g_i^{-1}$ is the final letter of $g$}; \end{array} \right.$$ where $g$ is written as a reduced word in the alphabet $\{g_i\}$. 
We also define the \emph{lifted final edge} $\hat \tau (g) \in \T$ of $g$ to be the last occurence of a lift of $\tau(g)$ from $G$ to $\T$ on the directed path from $x_0$ to $\alpha(g)(x_0)$ in $\T$. 

\begin{lemma}
For any $g \in \F_b-\{1\}$, we have 
$\Phi_G^{-1}(\Cyl(g))=\Cyl_{x_0}(\hat \tau(g)).$
\end{lemma}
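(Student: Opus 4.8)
The plan is to read off both cylinder sets from the decomposition of $\T$ into \emph{tiles}: the connected components of the preimage of the spanning tree $\mathcal B$ under the covering $\T\to G$. As $\mathcal B$ is simply connected, each tile maps isomorphically onto $\mathcal B$; let $\mathcal B_0$ be the tile through $x_0$ and, via the isomorphism $\alpha\colon\F_b\to\Gamma$, index the tiles by $\mathcal B_h:=\alpha(h)(\mathcal B_0)$ for $h\in\F_b$, the deck group $\Gamma$ permuting them simply transitively. Since $\gamma_i=\alpha(g_i)$ crosses $e_i$ exactly once (in its chosen orientation) and no other non-tree edge, two tiles $\mathcal B_h,\mathcal B_{h'}$ are joined by a unique edge of $\T$ exactly when $h'=hg_i^{\pm1}$, that edge being the lift of $\ra{e_i}$ with origin in $\mathcal B_h$ if $h'=hg_i$, and the lift of $\la{e_i}$ with origin in $\mathcal B_h$ if $h'=hg_i^{-1}$. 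Hence the ``tile tree'' is canonically $\Cay(\F_b)$; as the tiles are bounded, $\partial\T=\partial(\text{tile tree})$. Now the geodesic ray $[x_0,\xi)$ of an end $\xi$ crosses a sequence of tiles $\mathcal B_0=\mathcal B_{w_0},\mathcal B_{w_1},\dots$ with $w_0=1$ and $w_{n+1}=w_ng_{i_n}^{\pm1}$ (reduced, since the ray does not backtrack), and unwinding the construction of $\Phi_G=\Phi^{-1}$ in the proof of Lemma~\ref{topconj} (there $\alpha(w_n)(x_0)\in\mathcal B_{w_n}$) identifies $\Phi_G(\xi)$ with the limit of the $w_n$. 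Consequently $\Phi_G(\xi)\in\Cyl(g)$, i.e.\ the reduced word $\Phi_G(\xi)$ starts with $g$, if and only if the tile sequence of $[x_0,\xi)$ passes through $\mathcal B_g$.

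Next I would identify $\hat\tau(g)$: I claim it is the edge joining $\mathcal B_{g'}$ to $\mathcal B_g$, where $g'$ is $g$ with its last letter removed. The directed path from $x_0$ to $\alpha(g)(x_0)$ is the geodesic between them, and its tile sequence is a simple path in the tile tree from $\mathcal B_1$ to $\mathcal B_g$, hence is forced to equal $\mathcal B_1,\mathcal B_{g_1'},\dots,\mathcal B_{g'},\mathcal B_g$ for $g=g_1'\cdots g_k'$ reduced. So the lifts of non-tree edges of $G$ met by the path are exactly the $k$ joining edges it crosses, the $j$-th projecting to $\ra{e_i}$ (resp.\ $\la{e_i}$) when $g_j'=g_i$ (resp.\ $g_j'=g_i^{-1}$). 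By the definition of $\tau(g)$, the last of these, from $\mathcal B_{g'}$ to $\mathcal B_g$, is a lift of $\tau(g)$; and it is the \emph{last} occurrence of such a lift on the path precisely because $g_k'$, the final letter of $g$, is the one producing $\tau(g)$. This also fixes the orientation: $\hat\tau(g)$ points from its $\mathcal B_{g'}$-endpoint to its $\mathcal B_g$-endpoint.

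Finally, deleting $\hat\tau(g)$ disconnects $\T$ into two subtrees, one containing $\mathcal B_{g'}$---hence also $\mathcal B_1$ and $x_0$---and one containing $\mathcal B_g$. An end $\xi$ lies in $\Cyl_{x_0}(\hat\tau(g))$ iff the ray $[x_0,\xi)$ traverses the edge $\hat\tau(g)$, iff $\xi$ lies in the subtree containing $\mathcal B_g$; and since the tile tree is a tree, this is equivalent to the tile sequence of $[x_0,\xi)$ reaching $\mathcal B_g$---exactly the condition found above for $\Phi_G(\xi)\in\Cyl(g)$. Therefore $\Phi_G^{-1}(\Cyl(g))=\Cyl_{x_0}(\hat\tau(g))$.

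I expect the only real work to be the bookkeeping in the middle step: lining up the chosen orientations of the $e_i$, the generators $\gamma_i=\alpha(g_i)$, and the definition of $\tau$, and in particular checking that the ``last occurrence'' clause picks out the $\mathcal B_{g'}$--$\mathcal B_g$ edge rather than an earlier lift of the same edge of $G$ (which really can happen when a letter repeats in $g$). One could instead argue by induction on $|g|$, removing the last letter and using the equivariance $\Phi_G(\gamma x)=\alpha_G(\gamma)\Phi_G(x)$ of Lemma~\ref{topconj} together with the case $|g|=1$; this trades the tiling picture for the same orientation bookkeeping.
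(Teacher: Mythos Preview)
Your argument is correct and rests on the same geometric fact as the paper's, but packaged differently. The paper gives only a two-sentence sketch: a point of $\Cyl(g)$ is a reduced word $gg_j\cdots$ with $g_j\ne g_i^{-1}$ ($g_i$ the last letter), and when the concatenated closed walks $\alpha(g)\alpha(g_j)\cdots$ are reduced to a geodesic ray in $\T$, cancellation between consecutive $\gamma$'s can only eat into spanning-tree segments and never across the non-tree edge $\hat\tau(g)$, so the ray passes through $\hat\tau(g)$ and the backtracking reaches ``precisely up to'' it. Your Bass--Serre tiling makes this same observation structural: the cancellation being confined to spanning-tree segments is exactly the statement that the tile tree is $\Cay(\F_b)$, and $\hat\tau(g)$ is the edge of $\T$ separating $\mathcal B_g$ from $\mathcal B_1$. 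What your formulation buys is that both inclusions drop out symmetrically from the identification $\partial\T=\partial(\text{tile tree})=C$, and the orientation and last-occurrence bookkeeping you worried about is handled explicitly; the paper's version is shorter but leaves that bookkeeping, and arguably the reverse inclusion, to the reader.
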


\begin{proof}
Let $g \in \F_b-\{1\}$ and let $g_i$ denote the final letter of $g$. Since $\Cyl(g)$ consists of half infinite words starting $g g_{j} \dots$ with $g_j \neq g_i^{-1}$, under $\Phi_G^{-1}$, this is mapped to half infinite sequences of edges in $\T$, in which there can some backtracking (if there is cancellation of edges between $g_i$ and $g_j$), but never beyond $\hat \tau(g)$. Since $\partial \T$ is the universal cover of $G$, backtracking occurs precisely up to $\hat\tau(g)$, and $\Phi_G^{-1}(\Cyl(g))$ is the cylinder set of $\hat \tau(g)$. \end{proof}
 We now give an intrinsic formula for the push-forward of Patterson-Sulllivan measure to the boundary $C$ of the Cayley graph, purely in terms of data related to the original graph: \begin{theorem} \label{form} The push-forward measure $\mu=\Phi_{G\ast} \mu_{x_0}$ on $C$ is characterised (up to scaling by a global constant) by 
\begin{equation} \label{psm} \mu (\mathrm{Cyl}(g)) = \lambda_{\PF}^{-\ell(\alpha(g)) + d_{G-\tau(g)} (v_0,o(\tau(g)))} \cdot \mathbf{p}_{\tau(g)}, \end{equation}
where $\lambda_{\PF}$ is the Perron-Frobenius eigenvalue, and $\mathbf{p}$ a Perron-Frobenius eigenvector for $T$. 
\end{theorem}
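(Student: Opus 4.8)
The plan is to reduce, via the lemma just proved, to an explicit evaluation of Patterson--Sullivan measure on cylinder sets of $\T$. Since $\mu=\Phi_{G\ast}\mu_{x_0}$ and $\Phi_G^{-1}(\Cyl(g))=\Cyl_{x_0}(\hat\tau(g))$, we have
$$\mu(\Cyl(g))=\mu_{x_0}\bigl(\Cyl_{x_0}(\hat\tau(g))\bigr),$$
so everything reduces to a formula for $\mu_{x_0}(\Cyl_{x_0}(f))$, for $f$ an oriented edge of $\T$ with $x_0$ on the $o(f)$-side, and then to its specialisation at $f=\hat\tau(g)$; the edge $\hat\tau(g)$ does lie on the $o$-side of $x_0$ because by its definition it is traversed from origin to terminus by the directed geodesic from $x_0$ to $\alpha(g)(x_0)$. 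Throughout, write $\bar f\in\E$ for the image in $G$ of an oriented edge $f$ of $\T$.

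The first step is to compute the mass of a cylinder as seen from the origin of its edge. Set $\mathbf{p}_{\bar f}:=\mu_{o(f)}\bigl(\Cyl_{o(f)}(f)\bigr)$. If two oriented edges of $\T$ have the same image in $G$ they differ by some $\gamma\in\Gamma$, and the $\Gamma$-equivariance $\mu_{\gamma x}=(\gamma^{-1})_\ast\mu_x$ together with $\Cyl_{\gamma o(f)}(\gamma f)=\gamma\,\Cyl_{o(f)}(f)$ shows the value is unchanged; thus $\mathbf{p}$ is a vector indexed by $\E$, and it is positive since Patterson--Sullivan measure has full support (the $\Gamma$-action on $\T$ being cocompact). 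Decompose $\Cyl_{o(f)}(f)=\bigsqcup_h\Cyl_{o(f)}(h)$ over the oriented edges $h$ of $\T$ issuing from $t(f)$ with $h\neq\la{f}$. For $\xi$ in the piece $\Cyl_{o(f)}(h)$ the geodesic from $o(f)$ to $\xi$ runs through $t(f)$, so by the computation in \eqref{cst} the Busemann number $B_\xi(o(f),t(f))$ equals the constant $1$; hence, by conformality of dimension $\log\lambda_{\PF}$, one has $\mu_{o(f)}=\lambda_{\PF}^{-1}\mu_{t(f)}$ on that piece, whence $\mu_{o(f)}(\Cyl_{o(f)}(h))=\lambda_{\PF}^{-1}\mathbf{p}_{\bar h}$. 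Because the covering $\T\to G$ is a local isomorphism, the edges $h$ correspond bijectively to the oriented edges of $G$ issuing from $t(\bar f)$ other than the reversal of $\bar f$; summing therefore gives $\lambda_{\PF}\,\mathbf{p}_{\bar f}=(T\mathbf{p})_{\bar f}$, the clause ``$t(\ra{e_1})=o(\ra{e_2})$ but $\ra{e_2}\neq\la{e_1}$'' in the definition of $T$ being exactly the non-backtracking condition on $h$. Since $\mathbf{p}>0$ and $T$ is irreducible, Perron--Frobenius forces $\mathbf{p}$ to be a positive scalar multiple of the Perron--Frobenius eigenvector of $T$; normalising $\mathbf{p}$ to be that eigenvector pins down the global constant in the statement.

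The second step slides the base point. For a general oriented edge $f$ of $\T$ with $x_0$ on the $o(f)$-side, $\Cyl_{x_0}(f)=\Cyl_{o(f)}(f)$ as subsets of $\partial\T$, and by \eqref{cst} the Busemann cocycle $B_\xi(x_0,o(f))$ is the constant $d(x_0,o(f))$ on this set; conformality then yields
$$\mu_{x_0}(\Cyl_{x_0}(f))=\lambda_{\PF}^{-d(x_0,o(f))}\,\mu_{o(f)}(\Cyl_{o(f)}(f))=\lambda_{\PF}^{-d(x_0,o(f))}\,\mathbf{p}_{\bar f}.$$
With $f=\hat\tau(g)$, so $\bar f=\tau(g)$, this gives $\mu(\Cyl(g))=\lambda_{\PF}^{-d(x_0,o(\hat\tau(g)))}\,\mathbf{p}_{\tau(g)}$, and the theorem will follow once we show
$$d\bigl(x_0,o(\hat\tau(g))\bigr)=\ell(\alpha(g))-d_{G-\tau(g)}(v_0,o(\tau(g))).$$

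The last identity is the combinatorial core of the proof, and the step I expect to cost the most work. The directed geodesic from $x_0$ to $\alpha(g)(x_0)$ projects onto the reduced closed walk representing $\alpha(g)$ based at $v_0$, of length $\ell(\alpha(g))$; by construction $\hat\tau(g)$ is its last edge projecting to $\tau(g)$, so $d(x_0,o(\hat\tau(g)))$ equals $\ell(\alpha(g))$ minus one minus the length of the portion of that walk lying after its final passage through $\tau(g)$. By the way the generators $\gamma_i$ were chosen relative to the spanning tree $\mathcal{B}$, this terminal portion runs inside $\mathcal{B}$, and it remains to identify its length with a distance in $G-\tau(g)$; doing so requires carefully unwinding the definitions of $\mathcal{B}$, of the $\gamma_i$ and of the orientations chosen on the $e_i$, and is the one genuinely delicate piece of bookkeeping. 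The remaining points — that the weak limit defining $\mu_{x_0}$ has dimension $\log\lambda_{\PF}$, and that the cylinder decomposition used above is a finite disjoint cover — are routine.
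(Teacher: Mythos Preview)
Your proposal follows essentially the same route as the paper: define the vector of cylinder masses seen from edge origins, show via the conformality relation and the non-backtracking decomposition that it satisfies $T\mathbf{p}=\lambda_{\PF}\mathbf{p}$, identify it by Perron--Frobenius, slide the base point using \eqref{cst}, and finish with the combinatorial identity for $d(x_0,o(\hat\tau(g)))$. Your write-up is in fact more detailed than the paper's on the eigenvector step, and more candid about the final distance identity, which the paper dispatches in a single line; your caution there is warranted, since matching the terminal tree-segment to $d_{G-\tau(g)}(v_0,o(\tau(g)))$ (rather than to $t(\tau(g))$) does require tracking the orientation conventions on the $e_i$ carefully.
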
   

\begin{proof} We argue as in \cite{Kapovich}, 3.13, 4.2, 4.3. The conformal dimension of the Patterson-Sullilvan measures is $\log \lambda_{\PF}$. Suppose that $\ra{e}$ runs through $\E$, and $\ra{e}'$ runs through a set of lifts of $\ra{e}$ to $\T$, where $x_e=o(\ra{e}')$ is the origin of the lift $\ra{e}'$.  Define a vector $w \in \R^{2|E|}$ by $w_{\ra{e}}:=\mu_{x_e}(\mathrm{Cyl}_{x_e}(\ra{e}')).$ Then $w$ satisfies the equation $ T w = \lambda_{\PF} \cdot w,$ by the conformality of the measure and using (\ref{cst}) to move from adjacent edges back to the original edge. Since $w$ is non-negative, it is unique (up to global scaling; by Perron-Frobenius theory), so, up to scaling, equal to $\mathbf{p}$. 
By the previous lemma, we have 
$ \mu (\mathrm{Cyl}(g)) =  \mu_{x_0}(\mathrm{Cyl}_{x_0}(\hat \tau(g)). $
The conformality property (2) from Definition \ref{busi} implies that 
\begin{equation*} \mu_{x_0}(\mathrm{Cyl}_{x_0}(\hat \tau(g))=\lambda_{\PF}^{-d(x_0,o(\hat \tau(g)))} \cdot \mu_{o(\hat \tau(g))}(\mathrm{Cyl}(\hat \tau(g))). \end{equation*}
Now we have just seen that $\mu_{o(\hat \tau(g))}(\mathrm{Cyl}(\hat \tau(g))) = \mathbf{p}_{\tau(g)}$, and again, conformality and (\ref{cst}) imply that \begin{equation*} d(x_0,o(\hat \tau(g))) = \ell(\alpha(g)) - d_{G-\tau(g)} (v_0,o(\tau(g))). \qedhere \end{equation*}  
\end{proof} 


\section{Reconstruction of measure-theoretic invariants} 


The \emph{edge deck} $\mathcal{D}^e(G)$ of $G$ is the multi-set of isomorphism classes of all edge-deleted subgraphs of $G$. Harary 
conjectured in 1964 that graphs on at least four edges are edge-reconstructible, i.e., determined up to isomorphism by their edge deck. This is the \emph{edge reconstruction conjecture} (ERC), the analogue for edges of the famous vertex reconstruction conjecture (VRC) of Kelly and Ulam that every graph on at least three vertices is determined by its vertex deck (compare \cite{Bondy}). 
The degree of a vertex is the number of edges to which it belongs. 
The \emph{average degree} $\bar d$ of $G$ then equals $\bar d = \sum_{v \in V} \deg v/|V| = 2 {|E|}/{|V|}.$ In \cite{CK2}, we have proven the following: 

\begin{theorem} \label{main} Let $G$ denote a graph of average degree $\bar d>4$. Then $\lambda_{\PF}$ and
the function that associates to an element $G-e$ of the edge deck the unordered pair $\{\mathbf{p}_{\ra{e}},\mathbf{p}_{\la{e}}\}$ are edge-reconstructible. \qed
\end{theorem}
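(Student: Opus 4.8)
The plan is to deduce Theorem~\ref{main} from a few standard ``counting'' facts about edge-decks together with the combinatorial formula (\ref{psm}) for the Patterson--Sullivan measure. First I would recall the classical observation (Lov\'asz / Nash-Williams, see \cite{Bondy}) that for $\bar d > 4$, i.e.\ $|E| > 2|V|$, the edge deck $\mathcal D^e(G)$ already determines $|V|$, $|E|$, and in fact the number of subgraphs of $G$ of each isomorphism type whose number of edges is small relative to $|E|$; more precisely, a Kelly-type lemma gives, for each graph $H$ with $|E(H)| < |E(G)|$, the count of subgraphs of $G$ isomorphic to $H$ as an explicit linear combination of the corresponding counts in the cards $G-e$. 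The threshold $\bar d>4$ is exactly what is needed to make the relevant binomial coefficient nonzero, so that this inversion is legitimate. From such subgraph counts one recovers the number of closed walks of each given length $\ell$ in $G$ (a closed walk of length $\ell$ is supported on a subgraph with at most $\ell$ edges, and one counts walks on each small subgraph type), hence the full sequence $\mathrm{tr}(T^\ell)$ for all $\ell \geq 1$, since closed walks of length $\ell$ in $G$ without immediate backtracking are exactly counted by $\mathrm{tr}(T^\ell)$.

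Next, knowing $\mathrm{tr}(T^\ell)$ for all $\ell$ is knowing the characteristic polynomial of $T$ (via Newton's identities), hence the spectrum of $T$ with multiplicities; in particular $\lambda_{\PF}$, the largest eigenvalue, is reconstructible. This settles the first assertion. For the second assertion I would work card by card: fix an edge $e$ and consider the card $G-e$. The key point is that $G - e$, together with the global data $|E|$ and $\lambda_{\PF}$ already reconstructed, determines the two numbers $\mathbf{p}_{\ra e}$ and $\mathbf{p}_{\la e}$ up to the global Perron--Frobenius scaling, and the unordered pair because $G-e$ does not record which endpoint of $e$ was called $o(\ra e)$. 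Concretely, $\mathbf p$ is, up to scaling, the vector $w$ with $w_{\ra f} = \mu_{x_f}(\mathrm{Cyl}_{x_f}(\ra f'))$ from the proof of Theorem~\ref{form}, and the conformality relation expresses $\mathbf p_{\ra e}$ in terms of boundary measures of cylinder sets that ``avoid'' $e$, i.e.\ data visible in the universal cover of $G - e$ rather than of $G$. One phrases this as: the Perron--Frobenius eigenvector entry $\mathbf p_{\ra e}$ equals (a normalization constant depending only on $\lambda_{\PF}$) times a generating-function-type sum over walks in $G$ that start along $\ra e$ and never re-traverse $e$ --- and such walks live entirely in $G - e$ --- so the entry is computable from the card.

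The main obstacle, and the step requiring genuine care, is the passage from ``subgraph counts available from the deck'' to ``closed-walk counts / $\mathrm{tr}(T^\ell)$'' and then back down to the individual eigenvector entries $\{\mathbf p_{\ra e}, \mathbf p_{\la e}\}$ attached to a single card: one must check that the walk-counting generating function that computes $\mathbf p_{\ra e}$ really only involves walks confined to $G-e$ (so that it is a function of the card alone) and that the ambiguity is precisely the unordered pair and the one global scalar, no more. I would handle this by writing $\mathbf p_{\ra e}$ as a limit/ratio of counts $N_\ell(\ra e)$ of non-backtracking walks of length $\ell$ issuing from $\ra e$ in the tree that project to walks in $G$ not using the lift of $e$ as their first or any later edge --- equivalently non-backtracking walks of $G-e$ from the corresponding edge --- normalized by $\lambda_{\PF}^{\ell}$; this ratio converges to $\mathbf p_{\ra e}$ (up to the fixed global normalization of $\mathbf p$) by Perron--Frobenius, and every ingredient --- the numbers $N_\ell(\ra e)$ and the scalar $\lambda_{\PF}$ --- has been shown reconstructible. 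Since relabelling the endpoints of the deleted edge swaps $\ra e \leftrightarrow \la e$, only the unordered pair is well defined on $G-e$, which is exactly the claimed output. The remaining details --- the precise Kelly-lemma coefficients, convergence rates, and the bookkeeping identifying non-backtracking walks in $G-e$ with the relevant tree walks --- are routine and I would carry them out in \cite{CK2}.
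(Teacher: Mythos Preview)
The paper does not actually prove Theorem~\ref{main}; it is imported from \cite{CK2} and closed with a bare \qed, so there is no in-paper argument to compare your proposal against. Judged on its own, the proposal has two genuine gaps.

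For $\lambda_{\PF}$: Kelly's lemma for the edge deck recovers the subgraph count $s(H,G)$ only when $|E(H)|<|E(G)|$; the coefficient $|E(G)|-|E(H)|$ in Kelly's identity is nonzero precisely then, and this has nothing to do with $\bar d>4$. Your argument therefore yields $\mathrm{tr}(T^\ell)$ only for $\ell<|E|$, whereas $T$ is $2|E|\times 2|E|$ and Newton's identities need traces up to $\ell=2|E|$. What is missing is the Bass--Ihara three-term determinant formula (see e.g.\ \cite{Terras}), which expresses $\det(I-uT)$ as $(1-u^2)^{|E|-|V|}$ times a polynomial of degree $2|V|$ in $u$. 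The hypothesis $\bar d>4$ is exactly $2|V|<|E|$, so that the traces $\mathrm{tr}(T^\ell)$ for $\ell\le 2|V|$---which \emph{do} lie in Kelly's range---already determine $\det(I-uT)$ and hence $\lambda_{\PF}$. That is where the threshold enters, not in ``making the relevant binomial coefficient nonzero''.

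For the eigenvector entries: your proposed formula $\mathbf p_{\ra e}\propto\lim_{\ell}N_\ell(\ra e)/\lambda_{\PF}^{\ell}$, with $N_\ell(\ra e)$ counting non-backtracking walks that after a first step along $\ra e$ stay off $e$, gives $0$, not $\mathbf p_{\ra e}$. After the first step such a walk is a non-backtracking walk in $G-e$, so $N_\ell$ grows at rate $\lambda_{\PF}(T_{G-e})$; since $T_{G-e}$ is a proper principal submatrix of the irreducible nonnegative matrix $T_G$, one has $\lambda_{\PF}(T_{G-e})<\lambda_{\PF}(T_G)$ strictly, and your ratio tends to zero. Perron--Frobenius for $T_G$ does give $(T_G^\ell\mathbf 1)_{\ra e}/\lambda_{\PF}^\ell\to c\,\mathbf p_{\ra e}$, but those walks are \emph{not} confined to $G-e$: they may traverse $e$ repeatedly, and so are not visible on the card alone. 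Any extraction of $\{\mathbf p_{\ra e},\mathbf p_{\la e}\}$ must use more than the single card plus the scalar $\lambda_{\PF}$; the actual argument is in \cite{CK2}.
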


Looking at Theorems \ref{form} and \ref{main} simultaneously indicates that, from this point of view, reconstruction is intimately related to knowledge about lengths of closed walks.

\section{Reconstruction of closed walks lengths and the ERC} 

We give a direct and elementary proof of the result alluded to in the previous section, that knowing the ``structure of lengths on the space of closed walks'' determines the graph uniquely, formulated precisely in Theorem \ref{ERC-cond} below in terms of overlap lengths of loops in special spanning trees of the graph. 

Let $G$ denote a connected graph with minimal degree $\delta \geq 2$, without cut vertices in which every element of the edge deck is connected and has at most one vertex of degree $\delta-1$.  Fix an element $H=G-e$ of the edge deck of $G$ that contains at least one vertex $\alpha$ of degree $\delta-1$. The missing edge connects $\alpha$ to another (to-be-reconstructed) vertex $\omega \in H$. 

The assumptions are not restrictive in view of the ERC. The edge deck determines the vertex deck (
compare \cite{Bondy}, 6.13), so if VRC is known for some graph, then ERC also holds for such graphs. Since VRC (hence ERC) is known for disconnected graphs (
compare \cite{Bondy} 4.6), we can assume that $G$ is connected. Also, since VRC (hence ERC) is known for graphs with a cut vertex without pendant vertices (Bondy, \cite{BondyPacific}) and we assume that $G$ has no pendant vertices (since $\delta \geq 2$), we can assume that all cards in the edge deck are connected: if such a card is disconnected, any of the end points of the missing edge would be a cut vertex of $G$. Hence any card $H=G-e$ has first Betti number $b_1(H)=b-1$.  
Finally, if $H$ contains another vertex of degree $\delta-1$, then this vertex is $\omega$, and the problem is solved. Hence we can assume that all vertices except $\alpha$ have degree at least $\delta$ in $H$. 

Orient the missing edge $\ra{e}$ such that it has origin $o(\ra{e})=\alpha$. 
Now let $\gamma_0$ denote an embedded closed walk from $\alpha$ to $\alpha$ through $\ra{e}$ of minimal length $\ell_0$, that passes through $e$ exactly once.  Such a closed walk exists: since $H$ is connected, there exists a shortest path $P$ in $H$ from $\omega$ to $\alpha$, which we can close by adding the edge $e$.

\begin{lemma} The length \label{ell0} $\ell_0$ is edge-reconstructible.
\end{lemma}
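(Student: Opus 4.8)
The plan is to recover $\ell_0$ as the smallest cycle length at which the cycle-count of $G$ strictly exceeds that of the fixed card $H=G-e$. Write $m=|E(G)|$, and for a graph $X$ let $c_k(X)$ be the number of subgraphs of $X$ isomorphic to the $k$-cycle $C_k$. First I would observe that every cycle of $G$ either avoids $e$ — so that it is a cycle of $H$ — or passes through $e$, whence for each $k\ge 3$
\[
c_k(G)-c_k(H)=\#\{\,k\text{-cycles of }G\text{ through }e\,\}.
\]
Since $\gamma_0$ is a shortest cycle of $G$ through $e$ (an embedded closed walk traversing $\ra e$ is a cycle, and automatically meets $e$ once) and $\gamma_0$ exists, this would give
\[
\ell_0=\min\{\,k\ge 3 : c_k(G)-c_k(H)>0\,\}.
\]

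Next I would make the numbers $c_k(G)$ available from the edge deck. The edge count $m=|E(G-f)|+1$ is edge-reconstructible, and for each $k\le m-1$ the edge-deck form of Kelly's counting lemma (see, e.g., \cite{Bondy}) recovers $c_k(G)$: a fixed $k$-cycle of $G$ survives in $G-f$ for exactly $m-k\ge 1$ edges $f$, so $\sum_{f\in E(G)}c_k(G-f)=(m-k)\,c_k(G)$, and the left-hand side depends on the deck alone. It then remains to bound $\ell_0\le m-1$: a cycle of $G$ through $e$ of length $m$ would use every edge of $G$, and since $G$ has minimal degree $\ge 2$ this would force $G=C_m$, all of whose cards equal the path $P_m$ with two vertices of degree $\delta-1=1$ — contradicting the standing assumption that the cards have at most one vertex of degree $\delta-1$. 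Hence $G$ is not a cycle and $\ell_0\le m-1$.

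Putting the pieces together: $H$ is one of the given cards, so $c_k(H)$ is known; the values $c_k(G)$ for $3\le k\le m-1$ come from the deck via Kelly's lemma; and then $\ell_0=\min\{\,3\le k\le m-1: c_k(G)-c_k(H)>0\,\}$ can be read off, which proves the lemma. I do not expect a genuine obstacle here. The only steps needing care are the bound $\ell_0\le m-1$ — which ensures the decisive cycle-count lies in the reconstructible range, and is exactly where the hypothesis on degree-$(\delta-1)$ vertices in the cards is used — and the routine check that an embedded closed walk through $\ra e$ traversing $e$ once is the same thing as a cycle of $G$ through $e$, so that $\ell_0$ genuinely is the $e$-relative girth of $G$.
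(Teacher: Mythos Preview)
Your proposal is correct and follows essentially the same route as the paper: express $\ell_0$ as the least $k$ with $c_k(G)-c_k(H)>0$, bound $\ell_0<|E|$ by ruling out the case $G\cong C_m$ via the standing hypothesis on the cards, and then invoke Kelly's lemma to recover $c_k(G)$ for $k<|E|$. Your justification of the bound (using that a card of $C_m$ has two degree-$(\delta-1)$ vertices) is in fact cleaner than the paper's somewhat elliptical phrasing.
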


\begin{proof} 
Indeed, $\ell_0 = \min \{ r  \colon S_r(G)-S_r(G-e)>0\},$ where $S_r(G)$ is the number of subgraphs of a graph $G$ isomorphic to the cycle graph $C_r$.

Now $\ell_0 < |E|$.  Indeed, the length of the path $P$ is at most $|E(H)|$. In case $P$ has length $|E(H)|$, then $H$ is itself a path, and $G$ is a cycle graph. But a cycle graph with at least four edges has two adjacent vertices of minimal degree, which we assume is not the case. 
Now for $r < |E|$, $S_r(G)$ is edge-reconstructible by Kelly's Lemma (\cite{Bondy}, 6.6).
\end{proof} 

In the minimal case where $\delta = 2$, i.e., if $\deg_H \alpha = 1$, we make some replacements: we denote by $H$ the graph $H-\alpha$; we denote by $\alpha$ the vertex of the new $H$ that corresponds to the (unique) vertex adjacent to the original $\alpha$ in the original $H$, and we replace $\ell_0$ by $\ell_0-1$. After these replacements, we can assume that $H$ had minimal degree $\geq 2$. 

\begin{lemma} \label{tree}
If a vertex $v\in V$ is not equal or adjacent to $\alpha$ in $H$ and has degree $\deg_Hv\geq 3$, then there exist oriented edges $\ra{e_1}$ and $\ra{e_v}$ in $H$, such that $t(\ra{e_1})=\alpha$, $t(\ra{e_v})=v$, and such that there exists a spanning tree $\TT$ for $H$ with $e_1\not\in\TT$ and $e_v\not\in\TT$. If $\tilde v$ is a vertex adjacent to $v$, we can furthermore guarantee that $o(\ra{e_v})\not=\tilde v$.

If $v\in V$ is not equal but adjacent to $\alpha$ and $\deg_H\alpha\geq3$, then there exist oriented edges $\ra{e_1}$ and $\ra{e_v}$ in $H$, such that $t(\ra{e_1})=\alpha$, $t(\ra{e_v})=v$,  $o(\ra{e_v})\not=\alpha$, and such that there exists a spanning tree $\TT$ for $H$ with $e_1\not\in\TT$ and $e_v\not\in\TT$.
\end{lemma}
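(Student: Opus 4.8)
The plan is to reduce both statements to the following single assertion: there exist two distinct edges $e_1$ and $e_v$ of $H$, incident to $\alpha$ and to $v$ respectively, with the prescribed ``forbidden endpoint'' ($o(\ra{e_v})\neq\tilde v$ in the first statement, $o(\ra{e_v})\neq\alpha$ in the second), such that $H$ with these two edges deleted is still connected; one then takes $\TT$ to be any spanning tree of that graph and orients $e_1,e_v$ so that $t(\ra{e_1})=\alpha$ and $t(\ra{e_v})=v$. The quantities $\ell_0,\gamma_0$ play no role here.

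The structural input I would extract first is the shape of $H$. Since $G$ has no cut vertex it is $2$-connected, hence bridgeless, and a bridge of $H$ stops being a bridge in $G=H\cup\{\alpha\omega\}$ precisely when it lies on an $\alpha$--$\omega$ path in $H$; as $G$ is bridgeless, \emph{every} bridge of $H$ lies on every $\alpha$--$\omega$ path, and from this one deduces that the bridge/block decomposition of $H$ is a path $B_0-B_1-\cdots-B_k$ of $2$-edge-connected blocks joined by bridges, with $\alpha\in B_0$ and $\omega\in B_k$. Three consequences I would record: (i) every vertex of degree $\geq3$ lies inside one of the blocks and has at least two incident edges there (a vertex all of whose edges were bridges would be a node of degree $\geq3$ in a path); (ii) $\alpha$ lies in $B_0$ with $\deg_{B_0}\alpha=\deg_H\alpha\geq2$, since $B_0$ is an end block and is not a single vertex (as $\deg_H\alpha\geq2$), and if the bridge $B_0B_1$ were attached to $\alpha$ then $G-\alpha=H-\alpha$ would be disconnected, contradicting $2$-connectedness; (iii) hence if $v\sim\alpha$ the edge $\alpha v$ is not a bridge, so $v\in B_0$. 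I would also keep in mind that the hypotheses force the vertices of $G$ of minimal degree $\delta$ to form an independent set.

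Now the construction. First choose a non-bridge edge $e_v$ at $v$ lying inside $v$'s block, respecting the side condition: in the first statement $v$ has at least two such internal edges and at most one of them runs to $\tilde v$; in the second, $v\in B_0$ by (iii) and at most one internal edge at $v$ equals $\alpha v$, so $o(\ra{e_v})\neq\alpha$ can be arranged. Then $H-e_v$ is connected. If $v$ lies in a block other than $B_0$ (possible only in the first statement), let $e_1$ be any edge at $\alpha$ inside $B_0$; deleting one internal edge from each of two distinct $2$-edge-connected blocks keeps each of them, and all bridges, connected, so $H-\{e_1,e_v\}$ is connected. If $v\in B_0$, the problem localises to $B_0$, and $e_v$ is not incident to $\alpha$; here the key elementary fact is that re-attaching the single edge $e_v$ to the connected graph $B_0-e_v$ turns at most two edges at $\alpha$ from bridges into non-bridges (only edges on the unique $(B_0-e_v)$-path joining the ends of $e_v$ can be so affected, and that path uses at most two edges at $\alpha$), so, $B_0$ being bridgeless, at most two edges at $\alpha$ are bridges of $B_0-e_v$. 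Therefore if $\deg_{B_0}\alpha\geq3$ --- in particular always in the second statement --- there is a non-bridge edge $e_1$ at $\alpha$ in $B_0-e_v$, and $B_0-\{e_1,e_v\}$ is connected.

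The remaining case, which I expect to be the real work, is the first statement with $v\in B_0$ and $\deg_H\alpha=2$, say with $\alpha$'s two $B_0$-neighbours $x,y$. Here one uses that a $2$-edge-connected graph has no cut vertex of degree $2$, so $B_0-\alpha$ is connected, and, after deleting the resulting pendant edge at $\alpha$, that $B_0-\{\alpha x,e_v\}$ (equivalently $B_0-\{\alpha y,e_v\}$) is connected exactly when $e_v$ is not a bridge of $B_0-\alpha$. When $v$ has at least three edges inside $B_0$ the same counting argument, applied to $B_0-\alpha$ (obtained from the $2$-edge-connected graph $B_0$ by suppressing $\alpha$ and deleting one edge), provides such an $e_v$, which can still be taken to avoid $\tilde v$; the one obstruction is the ``theta'' configuration in which $v$ has exactly two edges inside $B_0$ and these, together with $\alpha x$ and $\alpha y$, split $B_0$ into two blobs, for in that case each of the four available pairs $\{e_1,e_v\}$ forms an edge cut. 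This is where I would have to invoke the full hypotheses --- the independence of the degree-$\delta$ vertices together with $b_1(H)=b-1\geq2$ --- to rule the configuration out (when $b-1=1$ the lemma is vacuous, since $H$ is then a single cycle with every degree equal to $2$). Collecting the cases and attending to the bookkeeping with orientations then finishes the proof; I expect the analysis excluding this theta configuration to be the main obstacle, everything else being routine work with the block decomposition.
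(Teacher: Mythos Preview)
Your reduction to finding distinct edges $e_1$ at $\alpha$ and $e_v$ at $v$ with $H-e_1-e_v$ connected matches the paper's first step, but thereafter the two arguments differ. The paper does not use the bridge/block decomposition of $H$; instead it deletes the \emph{vertex} $v$ and works with the components of $H-v$, of which there are at most two since $G$ has no cut vertex (the missing edge $e$ can reconnect at most two pieces). One then takes $C$ to be the component receiving at least two edges from $v$, locates $\alpha$ relative to $C$ and to $C-\alpha$, and chooses $e_1,e_v$ in a short case analysis. This is more local and more direct than your global block picture; in particular the paper never needs your counting of how many edges at $\alpha$ turn into bridges of $B_0-e_v$.

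That said, your proposal has a genuine gap: the ``theta'' sub-case ($\deg_H\alpha=2$, $v\in B_0$ with exactly two $B_0$-edges, and $B_0-\{\alpha,v\}$ disconnected) is left open, as you yourself acknowledge. In the paper's language this is precisely the case $\alpha\in C$ with $C-\alpha$ falling into exactly two pieces, each meeting both $\alpha$ and $v$ in a single edge; the paper's instruction there (``pick $e_v$ and $e_1$ toward different components of $C-\alpha$'') is terse, and you have correctly isolated the configuration that requires care --- but you still have to actually exclude or handle it, and invoking ``independence of degree-$\delta$ vertices together with $b_1(H)\geq 2$'' is a hope, not an argument. A second, smaller issue: your derivation of the path-of-blocks structure of $H$ relies on $G=H\cup\{\alpha\omega\}$, which is no longer literally true after the $\delta=2$ replacement made just before the lemma, so that portion would need to be re-justified in that regime. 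The paper's $H-v$ argument is less sensitive to this.
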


\begin{proof}
It suffices to prove that there are edges $e_1$ incident to $\alpha$, $e_v$ incident to $v$, $e_1\not=e_v$ such that $H-e_1-e_v$ is connected, since then we can set $\TT$ to be the spanning tree of $H-e_1-e_v$. Note that the number of connected components of $H-v$ is at most $2$; the missing edge $e$ connects at most two components and the graph $G$ has no cut-vertex by assumption. Since $\deg v\geq 3$ there is at least one connected component $C$ of $H-v$ such that there are two edges connecting $v$ and $C$.

First, assume that $\alpha$ is not adjacent to $v$. If $\alpha\not\in C$, but in the other component $C'$, then let $e_v$ be one of the edges connecting $v$ and $C$, and let $e_1$ be any edge incident to $\alpha$ such that $H-e_1$ is connected. Note that this is possible since if there is an edge $\tilde e$ incident to $\alpha$ contained in $C'$ such that $C'-\tilde e$ is not connected, $\alpha$ would have been a cut-vertex in $G$.  
If $\alpha\in C$ and $C-\alpha$ is connected, then let $e_v$ be one of the edges connecting $v$ and $C$ and set $e_1$ to be any edge incident with $\alpha$. If $C-\alpha$ is not connected, then any of the connected components of $C-\alpha$ is connected to $v$, since otherwise $\alpha$ would have been a cut-vertex. Pick $e_v$ an $e_1$ such that they connect $v$ and $\alpha$ with different connected components of $C-\alpha$ respectively. It is clear that in both cases the choice can be made such that $o(\ra{e_v})\not=\tilde v$ for any vertex $\tilde v\not=\alpha$ adjacent to $v$.

Next, assume that $\alpha$ is adjacent to $v$ and that $\deg_H\alpha\geq3$. If $\alpha \not\in C$, then again pick one of edges connecting $v$ and $C$ as $e_v$ and an edge incident to $\alpha$ but not to $v$ as $e_1$. It is clear that the choice can be made such that $o(\ra{e_v})\not=\tilde v$ for any vertex $\tilde v\not=\alpha$ adjacent to $v$. If $\alpha\in C$, then pick for $e_v$ an edge connecting $v$ with $C$ but not incident with $\alpha$ in $H$. Since $\deg_H\alpha\geq 3$ there is an edge in incident with $\alpha$ but not with $o(\ra{e_v})$ and not with $v$. Pick this edge as $e_1$. 
\end{proof}

\begin{figure}[h] 
\begin{tikzpicture}[scale=0.9, every node/.style={transform shape}]
\node (fig1) at (0,0)
       {\includegraphics[scale=0.3]{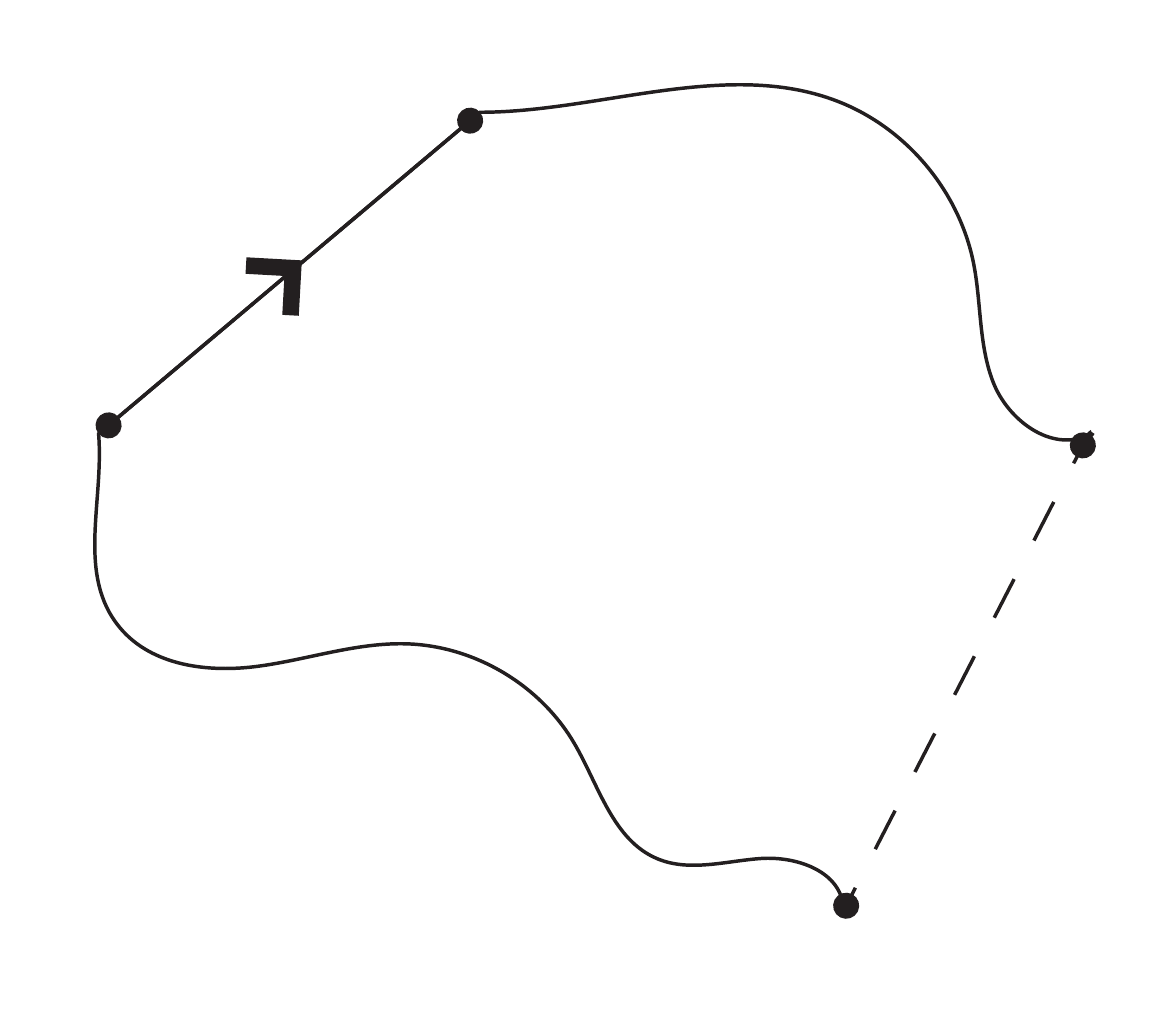}};
 \draw node at (1.4,-0.7) {$e$}; 
  \draw node at (1.7,0.5) {$\omega$}; 
    \draw node at (0.7,-1.5) {$\alpha$}; 
    \draw node at (-1.2,1) {$e_v$};
        \draw node at (-2,0.3) {$o(\ra{e_v})$};
        \draw node at (-0.4,1.5) {$v$};
\end{tikzpicture}
   \caption{A closed walk through $e$ and $e_v$}
    \label{ijk}
\end{figure}

Fix an arbitrary vertex $v \in V-\{\alpha\}$ and assume that either $v$ is not adjacent to $\alpha$ and $\deg_H v \geq 3$, or $\deg_H\alpha\geq3$ and $v$ is adjacent to $\alpha$. Let $\TT$ denote a corresponding tree constructed in the previous lemma (w.r.t.\ a choice of auxiliary vertex $\tilde v$ as indicated).
Now $EH - E\TT$ consists of $b-1$ edges $e_1,\dots,e_{b-1}$, one of which is $e_v$.  Create a basis for the fundamental group of $H$ based at $\alpha$ consisting of closed walks $\gamma_i$ for $i=1,\dots,b-1$ that start in $\alpha$, reach $\ra{e_i}$ via $E\TT$, go through $\ra{e_i}$ and then back to $\alpha$ through $E\TT$. Note that the length of these closed walks can be read off from $H$. Let $\gamma_v$ be the closed walk corresponding to $e_v$. 

If $\gamma$ and $\gamma'$ are two closed walks based at $\alpha$, we denote by $s_v(\gamma,\gamma')=s(\gamma,\gamma')$ the length of the path that two closed walks have in common at the start of the walk.

\begin{theorem} \label{ERC-cond} 
The reconstruction conjecture holds for a graph $G$ with minimal degree $\geq 2$ if we can reconstruct the values $s_v(\gamma_0, \gamma_i)$ for $i=1,\dots,b-1$ and for each vertex $v$ of degree $\geq 3$. 
\end{theorem}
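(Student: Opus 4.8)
The goal is to reconstruct the graph $G$ from the edge deck, given access to the overlap data $s_v(\gamma_0,\gamma_i)$. Recall that we have fixed a card $H = G-e$ containing a vertex $\alpha$ of degree $\delta-1$, and that the missing edge joins $\alpha$ to an unknown vertex $\omega \in H$. Since $H$ is completely known, reconstructing $G$ amounts exactly to identifying $\omega$ among the vertices of $H$. So the whole proof reduces to: \emph{use the overlap numbers to pin down $\omega$}.

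The plan is as follows. First I would observe that the closed walk $\gamma_0$, being an embedded closed walk of minimal length $\ell_0$ through $e$ exactly once, consists of the edge $e$ together with a shortest path $P$ in $H$ from $\omega$ to $\alpha$; by Lemma \ref{ell0} the number $\ell_0$ is edge-reconstructible, so $P$ has known length $\ell_0 - 1 = d_H(\alpha,\omega)$. Thus $\omega$ lies on the sphere of radius $\ell_0-1$ around $\alpha$ in $H$ — a known but generally non-singleton set of candidates. The point of the overlap data is to distinguish $\omega$ from the other candidates. For a vertex $v$ of degree $\geq 3$ (with the adjacency-to-$\alpha$ case treated via the second half of Lemma \ref{tree}), Lemma \ref{tree} furnishes, inside $H$, oriented edges $\ra{e_1}$ with $t(\ra{e_1})=\alpha$ and $\ra{e_v}$ with $t(\ra{e_v})=v$, and a spanning tree $\TT$ of $H$ avoiding $e_1$ and $e_v$. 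Completing $\{e_1,\dots,e_{b-1}\}=EH-E\TT$ to the stated basis $\gamma_1,\dots,\gamma_{b-1}$ of $\pi_1(H,\alpha)$, all the closed walks $\gamma_i$ and their lengths are read off from $H$ — only $\gamma_0$ (equivalently $\omega$) is unknown. The key step is then: knowing $s_v(\gamma_0,\gamma_v)$, i.e.\ the length of the initial common segment of the minimal loop $\gamma_0$ and the loop $\gamma_v$ through $v$, together with $\ell_0$ and the fully known tree $\TT$, determines whether $v$ lies on the path $P$ and, more sharply, recovers the distance $d_H(\omega,v)$ or places $\omega$ in the subtree of $\TT$ hanging off $v$ on the side of $\ra{e_v}$. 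Running this over all vertices $v$ of degree $\geq 3$ — and handling the bounded number of low-degree vertices by the degree-$\delta$ hypotheses on cards and the pruning trick for $\delta=2$ — the accumulated constraints isolate a single candidate for $\omega$, and hence reconstruct $G$.

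In more detail, the argument I would give for the key step is geometric. Orient $P$ from $\alpha$ to $\omega$. The loop $\gamma_0$ starts at $\alpha$ and traverses $P$; the loop $\gamma_v$ starts at $\alpha$, travels through $\TT$ to reach $\ra{e_v}$, crosses $\ra{e_v}$ into $v$, and returns through $\TT$. Both are walks from $\alpha$; $s_v(\gamma_0,\gamma_v)$ is the length of their maximal common prefix. Because $\gamma_v$'s outgoing part is a path in the tree $\TT$ (hence the unique $\TT$-geodesic from $\alpha$ to $o(\ra{e_v})$, followed by the edge $e_v$), and because $P$ is a geodesic in $H$, the common prefix is the longest initial segment of $P$ that coincides with the $\TT$-path from $\alpha$ toward $o(\ra{e_v})$ then $v$. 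Knowing its length $s_v(\gamma_0,\gamma_v)$ therefore tells us exactly how far along this specific known tree-path the geodesic $P$ runs before it diverges, and in particular whether $v$ itself is visited by $P$ (which happens iff $s_v(\gamma_0,\gamma_v)$ equals the length of that tree-path up to and including $v$). The side condition $o(\ra{e_v})\neq\tilde v$ in Lemma \ref{tree}, for a suitably chosen neighbour $\tilde v$ of $v$, is what guarantees the tree-path genuinely enters $v$ through $e_v$ rather than sneaking in along $P$'s own continuation, so the overlap number is unambiguous. By varying the auxiliary vertex $\tilde v$ over the neighbours of $v$ one extracts, for each $v$ of degree $\geq 3$, the precise behaviour of $P$ in a neighbourhood of $v$; stitching this local information together along $P$ reconstructs $P$ edge by edge from $\alpha$, and its endpoint is $\omega$.

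The main obstacle is exactly this last stitching/bookkeeping: showing that the local data at the high-degree vertices — plus the limited control one has at vertices of degree $2$ or degree $\delta-1$ in $H$ (only $\alpha$ has the latter, by the standing assumptions, and the $\delta=2$ pruning reduces to the case of minimal degree $\geq 2$) — genuinely suffices to trace out the whole geodesic $P$ and not merely constrain it. One has to rule out the possibility that $P$ runs through a long string of degree-$2$ vertices about which the $s_v$ give no information; here the hypotheses that every card is connected with at most one vertex of degree $\delta-1$, together with the fact (from Lemma \ref{ell0}) that $\ell_0 < |E|$, keep such stretches under control, since a maximal degree-$2$ path in $H$ is itself a reconstructible structure and its two endpoints have degree $\geq 3$ (or are $\alpha$). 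I expect the clean way to close the argument is an induction on the length of $P$: having reconstructed the first $k$ edges of $P$, the next vertex is either of degree $\geq 3$ — and then its overlap data, for an appropriate choice of $\tilde v$, names the $(k+1)$st edge — or it lies on a degree-$2$ segment whose far end is forced, completing the induction and thereby the reconstruction of $\omega$ and of $G$.
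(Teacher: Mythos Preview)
Your route differs substantially from the paper's, and the ``trace out $P$ edge by edge'' idea has a genuine gap.

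\medskip
\textbf{What the paper actually does.} The paper never attempts to follow the geodesic $P$. Instead, it first shows (their Lemma on lengths) that from the overlap data $s_v(\gamma_0,\gamma_i^{\pm})$ together with the overlaps among the $\gamma_i$'s for $i\geq 1$ (all readable in $H$), one can compute the length of \emph{every} reduced word in $\gamma_0,\gamma_1,\dots,\gamma_{b-1}$, via
\[
\ell(\gamma_{i_1}^{\pm}\cdots\gamma_{i_n}^{\pm})=\sum_j \ell(\gamma_{i_j})-2\sum_j s(\gamma_{i_j}^{\mp},\gamma_{i_{j+1}}^{\pm}).
\]
With the full length function in hand, the paper \emph{counts}, for each $r$, the number $L[\ra{e_v}]_r$ of closed walks at $\alpha$ of length $r$ that pass once through $\ra{e_v}$ and terminate with $\la{e}$: this is just the number of reduced words of a prescribed shape and length, computable from the length function. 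Finally $D[\ra{e_v}]:=\min\{r: L[\ra{e_v}]_r\neq 0\}$ equals $2+d_{H-e_v}(o(\ra{e_v}),\alpha)+d_{H-e_v}(v,\omega)$, so the test ``$D[\ra{e_v}]=2+d_{H-e_v}(o(\ra{e_v}),\alpha)$'' detects $\omega=v$ directly (with small variants when $\deg v=2$). No path-tracing is needed.

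\medskip
\textbf{Where your argument breaks.} Your induction hinges on the claim that, once the first $k$ edges of $P$ are known, the single number $s_u(\gamma_0,\gamma_u)$ for a neighbouring candidate $u$ ``names the $(k{+}1)$st edge''. But $s_u(\gamma_0,\gamma_u)$ is the common prefix of $P$ with the \emph{$\TT_u$-path} from $\alpha$ to $o(\ra{e_u})$, and the spanning tree $\TT_u$ comes from Lemma~\ref{tree}: it is fixed in advance and is in no way adapted to $P$. If the $\TT_u$-path from $\alpha$ leaves $P$ at the very first step (which nothing prevents), then $s_u(\gamma_0,\gamma_u)$ is small regardless of whether $u=w_{k+1}$, and conversely a large overlap can occur when $u$ is merely a vertex further along the tree-path past the point where $P$ diverges. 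So the overlap neither certifies $u\in P$ nor ``recovers $d_H(\omega,v)$'' as you assert. To make your scheme work you would need, at each step, a spanning tree whose $\alpha$-path contains the already-known prefix of $P$; Lemma~\ref{tree} does not provide that, and the hypothesis of the theorem gives you the overlaps for \emph{one} fixed system of trees, not an adaptive family. The paper's approach sidesteps this entirely by passing through the length function, which uses \emph{all} the $s_v(\gamma_0,\gamma_i)$ simultaneously rather than one $s_v(\gamma_0,\gamma_v)$ at a time.
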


\begin{proof} We start with two lemmas: 

\begin{lemma} \label{length}
If $\gamma$, expressed as a reduced word in the generators $\{\gamma_i\}_{i=0}^{b-1}$, is an arbitrary closed walk in $G$, based at $\alpha$, then the length of $\gamma$ is edge-reconstructible. 
\end{lemma}

\begin{proof} 
We know the lengths of all the generating closed walks $\gamma_i$; for $i>0$, this length can be read off from $H$, and for $i=0$, we know this length by construction. We also know the lengths of the overlaps $s(\gamma_i^{\pm1},\gamma_j^{\pm1})$; for $i>0$ and $j>0$, $s(\gamma_i^{\pm 1}, \gamma_j^{\pm 1})$ can be read off from $H$, and for $i=0$ or $j=0$, we know this length by assumption. 
In a reduced word in $\gamma_i$ and their inverses, no entire closed walk can cancel, since each closed walk contains an edge ($e_i$ for $i>1$ or $e$ for $\gamma_0$) that does not occur in any other generator. Thus, knowing the overlap between generators and their inverses, we know the length of any closed walk given as a word in the generators and their inverses. Concretely:
$$\ell(\gamma_{i_1}^{\pm 1}\dots\gamma_{i_n}^{\pm 1})=\sum_{j=1}^{n}\ell(\gamma_{i_j}^{\pm 1})-2\sum_{j=1}^{n-1} s(\gamma_{i_j}^{\mp 1},\gamma_{i_{j+1}}^{\pm 1}),$$
where the $\gamma_{i_j}^{\pm}$ is the inverse of $\gamma_{i_j}^{\mp}$.
\end{proof} 

\begin{lemma} \label{L}
Let $L[\ra{e_v}]_r$ denote the number of closed walks of length $r$ starting at $\alpha$ that first pass through $\ra{e_v}$ exactly once, and end in $\la{e}$, without passing through $e$ before that. If $\deg v \geq 3$, then the numbers $L[\ra{e_v}]_r$ are edge-reconstructible. 
\end{lemma}

\begin{proof} The number $L[\ra{e_v}]_r$ is the number of words in $\gamma_i^{\pm 1}$ of total length $r$ in which $\gamma_v$ (the generator through $e_v$) occurs once, and ends (in order of composition) with $\gamma_0^{-1}$, which also occurs exactly once. 
\end{proof} 
To reconstruct $L[\ra{e_v}]_r$, one needs to check only the (finitely many) words of word length $\leq r$ in the given generators.
Now let $v$ run through the vertices of $G$ not equal to $\alpha$. Observe that if $\omega$ is adjacent to $\alpha$ in $H$ then $\deg_H\alpha\geq 3$. Define $D[\ra{e_v}]:= \min \{ r \colon L[\ra{e_{v}}]_r \neq 0 \}.$ There are four cases:
\begin{enumerate}
\item If $\deg v \geq 3$, then $\omega=v$ exactly if the minimal $r$ for which $L[\ra{e_v}]_r \neq 0$ equal two more then the distance between $o(\ra{e_v})$ and $\alpha$ in $H-e_v$: 
$$ D[\ra{e_{v}}] = 2+d_{H-e_{v}}(o(\ra{e_{v}}),\alpha)$$  Since we have reconstructed $L[\ra{e_v}]_r$, we have found $\omega$. 
\item If $\deg v = 2$ and both its neighbouring vertices have degree $2$ as well, then $v=\omega$, since we can assume that $G$ has no two adjacent vertices of degree $2$. 
\item If $\deg v = 2$ and $v$ has two neighbouring vertices $v_1$ and $v_2$ of degree $\geq 3$, choose $\ra{e_{v_1}}$ and $\ra{e_{v_2}}$ such that $o(\ra{e_{v_1}})\not\in\{v,\alpha\} $ and $o(\ra{e_{v_2}})\not\in\{v,\alpha\}$. Then $\omega=v$ exactly if $$ D[\ra{e_{v_i}}] = 3+d_{H-e_{v_i}}(o(\ra{e_{v_i}}),\alpha) \mbox{ for $i=1$ and $i=2$} .$$
\item If $\deg v = 2$, and $v$ has exactly one neighbouring vertex $v_1$ of degree two, then $v_1$ has a neighbouring vertex $v_2$ of degree $\geq 3$ and $v$ has a neighbouring vertex $v_3$ of degree $\geq 3$. Again, choose $\ra{e_{v_2}}$ and $\ra{e_{v_3}}$ such that $o(\ra{e_{v_2}})\not\in\{v,\alpha\}$, and $o(\ra{e_{v_3}})\not\in\{v,\alpha\}$. Now $\omega=v$ exactly if 
$$D[\ra{e_{v_i}}] = \left\{ \begin{array}{ll} 4+d_{H-e_{v_2}}(o(\ra{e_{v_2}}),\alpha) \mbox{ if } i=2\\  3+d_{H-e_{v_3}}(o(\ra{e_{v_3}}),\alpha) \mbox{ if } i=3.\end{array} \right.$$

\end{enumerate}
Hence in all cases, we have reconstructed the missing vertex $\omega$. Recall that if $\delta=2$, we had changed the meaning of $H, \alpha$ and $\ell_0$, but after having followed the above procedure to reconstruct $\omega$, in this case, the graph $G$ is found by adding a once-subdivided edge between $\alpha$ and $\omega$ in $H$. 
\end{proof}

\bibliographystyle{amsplain}

\end{document}